\definecolor{refs}{rgb}{0.7,0,0}
\definecolor{ext}{RGB}{112,112,112}
\definecolor{cite}{RGB}{034,113,179}
\newtheorem{theorem}{Theorem}[section]
\newtheorem{proposition}[theorem]{Proposition}
\theoremstyle{definition}
\newtheorem*{remark}{Remark}
\newcommand{\R}{\mathbb{R}}
\newcommand{\N}{\mathbb{N}}
\newcommand{\II}{\mathcal{I}}
\newcommand{\MM}{\mathcal{M}}
\title{The Schwarzian derivative and Euler--Lagrange equations}
\author{Wojciech Kry\'nski}
\thanks{Partially supported by the grant 2019/34/E/ST1/00188 from the National Science Centre, Poland.}
\address{
Institute of Mathematics, Polish Academy of Sciences, ul. \'Sniadeckich 8, 00-656 Warszawa, Poland}
\email{krynski@impan.pl}
\keywords{Schwarzian derivative, Euler--Lagrange equations, Geometry of ODEs}
\begin{document}

\begin{abstract}
We study the Schwarzian derivative from a variational viewpoint. Firstly we show that the Schwarzian derivative defines a first integral of the Euler--Lagrange equation of a second order Lagrangian. Secondly, we show that the Schwarzian derivative itself is the Euler--Lagrange operator for an appropriately chosen class of variations.
\end{abstract}

\maketitle

\section{Introduction}
The Schwarzian derivate of function $u$ is defined as
\[
S(u)=\frac{u'''}{u'}-\frac{3}{2}\left(\frac{u''}{u'}\right)^2.
\]
The derivative appears in the projective and conformal geometry as well as in many other contexts in mathematics and mathematical physics (see e.g. \cite{AW,BE,G,Gi,OS} and \cite{OT} for a survey). In this note we intend to exhibit its variational nature.  For this we shall consider the following equation
\begin{equation}\label{eq_schwarz}
S(u)=0.
\end{equation}
Interestingly, the equation turns out to be a one-dimensional counterpart of the conformal geodesic equation known in the conformal geometry \cite{BE, CG, DK, FH, FS, SZ}. We show that the variational approach to the conformal geodesics recently proposed in \cite{DK} can be adapted in the context of the Schwarzian derivative as well. Consequently we interpret \eqref{eq_schwarz} as an Euler--Lagrange equation. Note that a variant of the Schwarzian mechanics has been studied in \cite{G} where a Hamiltonian approach is developed.

Our results split into two parts. Firstly, we define a second order Lagrangian function which gives rise to a 4th order Euler--Lagrange equation with $S(u)$ being a first integral. In particular, solutions to \eqref{eq_schwarz} form a special subclass of solutions of the 4th order equation. Further, we study the 4th order equation from the geometrical viewpoint and characterize the special class of solutions corresponding to $S(u)=0$ in terms of the W\"unschmann invariants \cite{B,DT}. It turns out that the solution space of the equation is equipped with a field of curves in the projective tangent bundle that reduce to the light cones of a flat Lorentzian conformal metric on a submanifold corresponding to \eqref{eq_schwarz} (this is consistent with \cite{T} where the geometric structure of \eqref{eq_schwarz} is considered). On the other hand, in neighborhoods of points of the solution space where $S(u)$ does not vanish we get new examples of the so-called isotrivial causal geometry (see \cite{M} and \cite{KM}).
 
Secondly, we aim to express equation \eqref{eq_schwarz} itself as the Euler--Lagrange equation. A priori, one expects the Euler--Lagrange equations to be of even order. However, once the class of variations with respect to which the critical points are calculated is extended, it becomes harder to be a critical point and consequently the Euler--Lagrange equations may be of lower order, not necessarily even. This approach has been successfully developed in \cite{DK}. We show that it can be adopted in the case of equation \eqref{eq_schwarz}, too.

\section{Variational approach to the Schwarzian derivative}

\subsection{Notation}
Recall that a variation of a given function $u\colon[t_0,t_1]\to\R$ is a mapping $\varphi\colon[t_0,t_1]\times[-1,1]\to\R$ such that $\varphi(t,0)=u(t)$.  Denote $v(t)=\frac{\partial}{\partial s}\varphi(t,s)|_{s=0}$ and call it a variational vector field along $u$. In the standard approach to the variational problems one often assumes that  $\varphi(t_i,s)$ coincides with $u(t_i)$ up to certain order which translates to the fact that $v(t_i)=v'(t_i)=\ldots=v^{(k)}(t_i)= 0$, for an appropriate $k\in\N$ and $i=0,1$. However, in due course we shall provide a more subtle class of boundary conditions that will result in a more sophisticated class of variations that will be better adapted to the purposes of our approach. 

Let $\II$ be a given integral functional. The variation of $\delta_v\II[u]$ with respect to a variational vector field $v$ is defined as 
\[
\delta_v\II[u]=\frac{d}{ds}\II[\varphi(t,s)]|_{s=0}.
\]
We shall say that a function $u$ is a critical point of $\II$ with respect to a class of admissible variations if $\delta_v\II_F[u]=0$ for all $v$'s that are considered.

\subsection{4th order Euler--Lagrange equations}
In this section we shall consider a second order Lagrangian in the form
\begin{equation}\label{eq_L}
L(u,u',u'')=\left(\frac{u''}{u'}\right)^2
\end{equation}
and the associated integral functional
\begin{equation}
\II_L[u]=\int_{t_0}^{t_1}L(u,u',u'') dt,
\end{equation}
where $[t_0,t_1]$ is a fixed interval in $\R$. Note that this functional is a one-dimensional counterpart of the functional introduced by T.\,Bailey and M.\,Eastwood in \cite{BE} in the context of the conformal geometry.  Indeed, setting $n=1$ in \cite{BE} reduces the Lagrangian of \cite{BE} in the flat case to \eqref{eq_L}. However, there is a substantial difference in comparison to the conformal geometry. Namely, one cannot prove conformal invariance of \eqref{eq_L} as there is no Schouten tensor in dimension~1 which adds an extra term in \cite{BE}. On the other hand, Lagrangian \eqref{eq_L} is invariant with respect to the affine transformations. 

We start with the classical problem of finding critical points of $\II_L$ subject to the class of variations that keep the endpoints fixed up to the first order.
\begin{theorem}
The Euler--Lagrange equation associated to \eqref{eq_L} takes the form
\begin{equation}\label{eq_EL4}
\frac{u''''}{u'}=-3\frac{(u'')^3}{(u')^3}+4\frac{u'''u''}{(u')^2}.
\end{equation}
Moreover, any solution to the equation $S(f)=0$ satisfies \eqref{eq_EL4}.
\end{theorem}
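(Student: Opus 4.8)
The plan is to handle the two assertions in turn, the second being an immediate consequence of an identity I would establish along the way. For the first assertion I would apply the standard Euler--Lagrange formula for a second order Lagrangian,
\[
\frac{\partial L}{\partial u}-\frac{d}{dt}\frac{\partial L}{\partial u'}+\frac{d^2}{dt^2}\frac{\partial L}{\partial u''}=0,
\]
which expresses that $u$ is a critical point of $\II_L$ among the variations that fix the endpoints together with their first derivatives. Since $L=(u''/u')^2$ does not depend on $u$, the first term vanishes, and I would compute $\partial L/\partial u'=-2(u'')^2/(u')^3$ and $\partial L/\partial u''=2u''/(u')^2$. Carrying out the single and double total $t$-derivatives (remembering that $u',u'',u'''$ all depend on $t$) and collecting terms, then multiplying by $(u')^2/2$, should produce exactly \eqref{eq_EL4}.

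For the second assertion my plan is to compute the total derivative of the Schwarzian directly. Differentiating
\[
S(u)=\frac{u'''}{u'}-\frac{3}{2}\left(\frac{u''}{u'}\right)^2
\]
term by term yields
\[
\frac{d}{dt}S(u)=\frac{u''''}{u'}-4\frac{u'''u''}{(u')^2}+3\frac{(u'')^3}{(u')^3},
\]
which is precisely the left-hand side of \eqref{eq_EL4} after all its terms are moved to one side. Hence \eqref{eq_EL4} is equivalent to $\frac{d}{dt}S(u)=0$, that is, to $S(u)=\const$ along solutions; this identity is exactly the ``first integral'' content of the theorem. The desired implication is then immediate: if $S(f)=0$ then $S(f)$ is the zero function, so $\frac{d}{dt}S(f)=0$, which by the identification above is nothing other than \eqref{eq_EL4}. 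Thus the solutions of the Schwarzian equation form the zero level set of the first integral $S$ inside the solution space of \eqref{eq_EL4}.

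I do not anticipate a real obstacle, since the argument is entirely a differentiation exercise; the only conceptual step is recognizing that the Euler--Lagrange expression coincides with $\frac{d}{dt}S(u)$ rather than with a more complicated quantity. Everything else reduces to careful arithmetic in the two successive total derivatives and in the final simplification, so the main risk is a sign or coefficient slip rather than any structural difficulty.
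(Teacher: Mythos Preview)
Your proposal is correct and in fact slightly more efficient than the paper's argument. The paper proceeds by writing out the variation $\delta_v\II_L[u]$ explicitly and performing successive integrations by parts, recording several intermediate forms (with boundary terms $B_0,B_1,B_2$); the final form yields \eqref{eq_EL4}, while an intermediate form has integrand $-2S(u)\cdot((u')^{-1}v'-u''(u')^{-2}v)$, from which the implication $S(u)=0\Rightarrow$ \eqref{eq_EL4} is read off variationally. You instead invoke the ready-made Euler--Lagrange formula for second order Lagrangians and then observe the pointwise identity $\tfrac{d}{dt}S(u)=\tfrac{u''''}{u'}-4\tfrac{u'''u''}{(u')^2}+3\tfrac{(u'')^3}{(u')^3}$, which is exactly the Euler--Lagrange expression. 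Your route is shorter and has the bonus of immediately exhibiting $S(u)$ as a first integral (the content of the subsequent Proposition~\ref{prop_integrals}); the paper's route, on the other hand, produces the intermediate variational formulae that are reused verbatim in the proof of Theorem~\ref{thm3} on extended variations, so its extra labour is not wasted. One small slip: after summing the Euler--Lagrange terms you should multiply by $u'/2$, not $(u')^2/2$, to land on the form \eqref{eq_EL4}.
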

\begin{proof}
While it is straightforward to derive the Euler--Lagrange equation for $\II_L$ we shall look closer at calculations in order to prove the second statement of the theorem and also for the purposes of the subsequent parts of the paper. 
We compute the derivative $\frac{d}{ds}$ under the integral and then integrate by parts appropriate terms. As a result, $\delta_v\II_L[u]$ can be written in one of the following forms
\begin{eqnarray}
\delta_v\II_L[u]&=&\int_{t_0}^{t_1}\left(2\frac{u''v''}{(u')^2}-2\frac{(u'')^2v'}{(u')^3}\right)dt\nonumber\\
&=&\int_{t_0}^{t_1}\left(-2\frac{u'''}{(u')^2}+2\frac{(u'')^2}{(u')^3}\right)v'dt+B_0|_{t_0}^{t_1}\label{eq_var3a}\\
&=&\int_{t_0}^{t_1}\left(-2\frac{u'''}{u'}+3\frac{(u'')^2}{(u')^2}\right)((u')^{-1}v'-u''(u')^{-2}v)dt+B_1|_{t_0}^{t_1}\label{eq_var3}\\
&=&\int_{t_0}^{t_1}\left(2\frac{u''''}{u'}+6\frac{(u'')^3}{(u')^3}-8\frac{u'''u''}{(u')^2}\right)(u')^{-1}vdt+B_2|_{t_0}^{t_1}\label{eq_var4}
\end{eqnarray}
where the boundary terms are respectively
\[
B_0=\frac{2u''v'}{(u')^2},\qquad B_1=\frac{2u''v'}{(u')^2}-\frac{(u'')^2 v}{(u')^3},\qquad B_2=\frac{2u''v'}{(u')^2}-\frac{2u'''v}{(u')^2}+\frac{2(u'')^2 v}{(u')^3}.
\]
Under the assumption that $\varphi(t_0,s)=u(t_0)$ and $\varphi(t_1,s)=u(t_1)$ up to the first order, i.e. $v(t_i)=0$ and $v'(t_i)=0$ for $i=0,1$, the boundary terms vanish. Consequently the fundamental lemma of the calculus of variations applied to \eqref{eq_var4} implies that \eqref{eq_EL4} has to be satisfied.
Note that the intermediate step \eqref{eq_var3} is not necessary for the derivation of \eqref{eq_EL4}. However, it is obtained from \eqref{eq_var3a}, by a simple integration by parts and implies that $\delta_v\II_L[u]=0$ for $u$ satisfying \eqref{eq_schwarz}. Consequently \eqref{eq_EL4} holds provided that \eqref{eq_schwarz} holds.
\end{proof}

Closer look at the calculations implies the following
\begin{proposition}\label{prop_integrals}
$S(u)$ as well as
\[
C(u)=(u')^{-1}\left(\frac{u'''}{u'}-\frac{(u'')^2}{(u')^2}\right)
\] 
are first integrals of \eqref{eq_EL4}.
\end{proposition}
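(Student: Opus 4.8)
The plan is to verify, for each of the two quantities, that its total $t$-derivative vanishes identically on solutions of \eqref{eq_EL4}; this is exactly what it means to be a first integral. The whole argument hinges on one observation, already encoded in \eqref{eq_var4}: the Euler--Lagrange operator associated with \eqref{eq_L} is, up to a harmless factor, the total derivative of the Schwarzian itself. Once this is recognized, both claims follow with essentially no further work.

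First I would treat $S(u)$. Differentiating $S(u)=\frac{u'''}{u'}-\frac32\left(\frac{u''}{u'}\right)^2$ in $t$ by the quotient rule and collecting terms yields
\[
\frac{d}{dt}S(u)=\frac{u''''}{u'}-4\frac{u'''u''}{(u')^2}+3\frac{(u'')^3}{(u')^3}.
\]
This is precisely the combination occurring in \eqref{eq_EL4}: rewriting that equation as $\frac{u''''}{u'}+3\frac{(u'')^3}{(u')^3}-4\frac{u'''u''}{(u')^2}=0$ shows that \eqref{eq_EL4} says nothing other than $\frac{d}{dt}S(u)=0$. Hence $S(u)$ is constant along every solution. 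This is the same expression that appears (multiplied by $2(u')^{-1}v$) inside the integrand of \eqref{eq_var4}, so no genuinely new computation is needed.

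Next I would handle $C(u)=(u')^{-1}\big(\frac{u'''}{u'}-\frac{(u'')^2}{(u')^2}\big)=\frac{u'''}{(u')^2}-\frac{(u'')^2}{(u')^3}$. Differentiating term by term, again by the quotient rule, and factoring out $(u')^{-1}$ gives
\[
\frac{d}{dt}C(u)=(u')^{-1}\left(\frac{u''''}{u'}-4\frac{u'''u''}{(u')^2}+3\frac{(u'')^3}{(u')^3}\right)=(u')^{-1}\,\frac{d}{dt}S(u).
\]
Thus $\frac{d}{dt}C(u)$ is simply $(u')^{-1}$ times the Euler--Lagrange operator, so it too vanishes on solutions of \eqref{eq_EL4}, and $C(u)$ is a first integral. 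I do not anticipate any real obstacle: the content is entirely the identification of \eqref{eq_EL4} with $\frac{d}{dt}S(u)=0$, and the only point demanding care is the bookkeeping in the two quotient-rule differentiations — in particular checking that the bracket in $\frac{d}{dt}C(u)$ factors exactly as $(u')^{-1}$ times the same operator, so that the two total derivatives coincide up to that factor.
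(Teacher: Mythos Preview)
Your proposal is correct and is essentially the approach the paper has in mind: the paper gives no separate proof but simply says that a ``closer look at the calculations'' (i.e.\ at \eqref{eq_var3a}--\eqref{eq_var4}) yields the proposition, and what those calculations encode is precisely the identity you write out, namely that the Euler--Lagrange expression in \eqref{eq_EL4} equals $\tfrac{d}{dt}S(u)=u'\,\tfrac{d}{dt}C(u)$. You have just made this explicit by direct differentiation rather than reading it off the successive integrations by parts.
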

Note that a multidimensional counterpart of $C(u)$ appeared in the upper mentioned context of the conformal geodesics where $\frac{d}{dt}C=0$ is referred to as the Mercator equation \cite{DK}. We shall exploit the fact that $S(u)$ is the first integral to get explicit solutions to \eqref{eq_EL4}.

\begin{proposition}
A general solution solution to \eqref{eq_EL4} is 
\begin{equation}\label{eq_solution}
u(t)=
\begin{cases}
\frac{Ae^{2\sqrt{c}t}+B}{Ce^{2\sqrt{c}t}+D}&\text{if } S(u)=-c<0\\ 
\frac{At+B}{Ct+D}&\text{if } S(u)=0\\
\frac{A\tan\left(\sqrt{c}t\right)+B}{C\tan\left(\sqrt{c}t\right)+D}&\text{if } S(u)=c>0 
\end{cases}
\end{equation}
where $A,B,C,D$ are arbitrary constants such that $AB-CD\neq0$.
\end{proposition}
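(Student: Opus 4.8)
The plan is to reduce the fourth order equation \eqref{eq_EL4} to the third order Schwarzian equation with a constant right hand side, and then integrate the latter. By Proposition~\ref{prop_integrals} the Schwarzian $S(u)$ is a first integral of \eqref{eq_EL4}, and in fact a single differentiation shows, on any interval where $u'\neq 0$,
\[
\frac{d}{dt}S(u)=\frac{u''''}{u'}-4\frac{u'''u''}{(u')^2}+3\frac{(u'')^3}{(u')^3},
\]
so that \eqref{eq_EL4} is literally the equation $\frac{d}{dt}S(u)=0$. Hence $u$ solves \eqref{eq_EL4} if and only if $S(u)\equiv k$ for some constant $k\in\R$, and the general solution of the fourth order equation is obtained by solving the third order equation $S(u)=k$ and letting $k$ vary; the parameter $k$ accounts for the fourth constant of integration.

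To integrate $S(u)=k$ I would lean on the Möbius invariance of the Schwarzian, namely $S\bigl(\frac{au+b}{cu+d}\bigr)=S(u)$ for $ad-bc\neq 0$, which I would either recall as classical or verify directly from the cocycle identity for $S$. It implies that whenever $u_0$ is one solution of $S(u)=k$, every $\frac{Au_0+B}{Cu_0+D}$ is again a solution, giving a three-parameter family since the group acts through $\mathrm{PGL}_2(\R)$, which is three-dimensional. As $S(u)=k$ is of third order, such a three-parameter family is already its general solution, so it suffices to exhibit one particular solution for each value of $k$.

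The particular solutions are found by computing $S$ on natural candidates and matching the constant: a direct computation gives $S(e^{\lambda t})=-\tfrac12\lambda^2$, $S(t)=0$ and $S(\tan(\mu t))=2\mu^2$. The sign of $k$ thus selects an exponential, a linear, or a tangent profile, and fixing $\lambda,\mu$ so that the Schwarzian equals $k$ followed by a general Möbius transformation produces exactly the three cases in \eqref{eq_solution}; the non-degeneracy requirement on the constants is precisely the condition that the Möbius transformation be invertible.

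The step demanding the most care---and the main obstacle---is \emph{completeness}: one must make sure the Möbius orbit of a single $u_0$ exhausts the full solution set of $S(u)=k$ and not merely a subfamily. The clean way to guarantee this is the classical correspondence with the linear equation $y''+\tfrac{k}{2}y=0$: setting $y_2=(u')^{-1/2}$ one checks $y_2''+\tfrac12 S(u)\,y_2=0$, while conversely any ratio $y_1/y_2$ of linearly independent solutions of $y''+\tfrac{k}{2}y=0$ has Schwarzian $k$. This yields a bijection between solutions of $S(u)=k$ and projective classes of solution pairs of a fixed second order linear ODE, so the general solution is genuinely $\frac{ay_1+by_2}{cy_1+dy_2}$; substituting the explicit fundamental systems in the three regimes recovers \eqref{eq_solution} and certifies that no solution has been overlooked.
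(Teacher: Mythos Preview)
Your argument is correct and follows essentially the same route as the paper: reduce \eqref{eq_EL4} to $S(u)=\text{const}$ via the first integral, then solve the Schwarzian equation through the classical correspondence with the linear equation $y''+\tfrac{k}{2}y=0$ and take ratios of a fundamental system. Your discussion of M\"obius invariance is an additional (and correct) heuristic layer, but since you ultimately appeal to the linear ODE for completeness, the two proofs coincide in substance; the paper simply cites this correspondence (to \cite{OT}) without spelling out the intermediate M\"obius step.
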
\label{prop_solution}
\begin{proof}
Since $S(u)$ is the first integral of \eqref{eq_EL4} it is sufficient to solve
\[
S(u)=c
\]
for any constant $c$. The solution depends on the sign of $c$. In fact (c.f. \cite{OT}), $u$ can be found as a ratio of two independent solutions to
\[
\psi''=-c\psi
\]
which gives \eqref{eq_solution} as a result.
\end{proof}

\begin{remark}
Solution $u(t)$ for $S(u)<0$ appeared in \cite{G} as a starting point for the studies of Hamiltonian mechanics related to the Schwarzian.
\end{remark}

\subsection{Geometric structure of the 4th order equation}
In this section we shall describe the geometry related to equation \eqref{eq_EL4} and characterize \eqref{eq_schwarz} in terms of the contact invariants of ODEs. For this we write \eqref{eq_EL4} in the form
\begin{equation}\label{eq_EL4a}
u''''=F(u',u'',u'')
\end{equation}
where
\[
F(p,q,r)=-3\frac{r^3}{p^2}+4\frac{qr}{p}.
\]
We are interested in \eqref{eq_EL4a} under the action of the group of contact transformations. There are two basic invariants of 4th order equations introduced by R.~Bryant in \cite{B}. We shall denote them $W_0$ and $W_1$. The two invariants are usually referred to as the generalized W\"unschmann invariants, as they are direct generalization of the W\"unschmann invariant introduced for 3rd order equations (see \cite{D,DT}). The following explicit expressions can be found in \cite{DT}
\[
W_1=\frac{9}{4}F_r\frac{d}{dt}F_r-\frac{3}{2}\frac{d^2}{dt^2}F_r+3\frac{d}{dt}F_q-\frac{3}{8}F_r^3-\frac{3}{2}F_qF_r-3F_p
\]
and
\[
\begin{aligned}
W_0=&\frac{11}{1600}F_r^4-\frac{9}{50}F_r^2\frac{d}{dt}F_r-\frac{1}{200}F_r^2F_q+\frac{21}{100}\left(\frac{d}{dt}F_r\right)^2+\frac{1}{50}\frac{d}{dt}F_rF_q-\frac{9}{100}F_q^2\\
&+\frac{7}{20}F_3\frac{d^2}{dt^2}F_r-\frac{1}{5}\frac{d^3}{dt^3}F_r+\frac{3}{10}\frac{d^2}{dt^2}F_q-\frac{1}{4}F_r\frac{d}{dt}F_q,
\end{aligned}
\]
where subscripts denote partial derivatives and $\frac{d}{dt}=\partial_t+p\partial_u+q\partial_p+r\partial_q+F\partial_r$ is the total derivative. If both invariants vanish then the solution space of an equation is equipped with a $GL(2)$-structure \cite{B,DT}. That is not the case for equation \eqref{eq_EL4a}. In fact, equation \eqref{eq_EL4a} can serve as a very interesting example of an ODE for which only half of the  W\"unschmann invariants vanish. Indeed we have
\begin{theorem}\label{thm2}
The W\"unschmann invariants of equation \eqref{eq_EL4a} satisfy $W_1=0$ and
\[
W_0=-\frac{36}{100}S(u)^2.
\]
The projective tangent bundle $P(T\MM)$ of the solution space $\MM$ of \eqref{eq_EL4a} is equipped with a field of curves that are
\begin{itemize}
\item[(a)] projectively equivalent to the curve
\[
t\mapsto(e^{-t},t,-e^t)
\]
at points of $\MM$ corresponding to solutions such that $S(u)<0$,
\item[(b)] projectively equivalent to the curve
\[
t\mapsto(t-\sin t,\cos t,t+\sin t)
\]
at points of $\MM$ corresponding to solutions such that $S(u)>0$, 
\item[(c)] projectively equivalent to the rational normal curve at points of $\MM$ corresponding to solutions such that $S(u)=0$.
\end{itemize}
\end{theorem}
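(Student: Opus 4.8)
The plan is to attack Theorem~\ref{thm2} in two independent stages: first the vanishing of $W_1$ together with the computation of $W_0$, and then the classification of the field of curves according to the sign of $S(u)$.

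\medskip

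\noindent\emph{Stage 1: the W\"unschmann invariants.} The function $F(p,q,r)=-3r^3/p^2+4qr/p$ from \eqref{eq_EL4a} is explicit, so the partial derivatives $F_p,F_q,F_r$ are immediate. The only nonroutine ingredient is the total derivative $\frac{d}{dt}=\partial_t+p\partial_u+q\partial_p+r\partial_q+F\partial_r$; I would compute $\frac{d}{dt}F_r$, $\frac{d^2}{dt^2}F_r$, $\frac{d^3}{dt^3}F_r$ and $\frac{d}{dt}F_q$, $\frac{d^2}{dt^2}F_q$ by direct application of this operator, keeping everything as rational functions of $p,q,r$. Substituting these into the displayed formula for $W_1$ and simplifying should yield $W_1=0$ identically. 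For $W_0$ the same substitutions feed into the longer expression; here one expects massive cancellation, and the claim is that the surviving terms assemble into $-\tfrac{36}{100}$ times the square of $S(u)=r/p-\tfrac32(q/p)^2$ (re-expressed in the jet coordinates $p=u',q=u'',r=u'''$). The main obstacle in this stage is purely computational bulk in $W_0$: the expression has ten terms, each generating several rational summands after the total derivatives are expanded, and verifying that they collapse to a perfect square is delicate. I would organise the computation by clearing the common denominator $p^4$ early, so that everything is a polynomial in $p,q,r$ up to an overall power of $p$, and track the numerator; a symbolic algebra check is the natural way to guarantee the coefficient $-36/100$.

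\medskip

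\noindent\emph{Stage 2: the field of curves.} By Bryant's theory \cite{B,DT}, a 4th order ODE with $W_1=0$ carries on the projective tangent bundle $P(T\MM)$ of its solution space a distinguished field of curves, which at each point is the projectivised image of the osculating curve built from a fundamental system of solutions of the variational (linearised) equation along the corresponding solution $u$. The strategy is therefore to linearise \eqref{eq_EL4a} about a solution and read off its fundamental solutions. Since Proposition~\ref{prop_integrals} shows $S(u)$ is a first integral, along any solution $S(u)=c$ is constant, and by Proposition~\ref{prop_solution} the solutions are ratios of solutions of $\psi''=-c\psi$. The linearised equation then has constant-coefficient type governed by the sign of $c$: for $c=S(u)<0$ the fundamental solutions involve $e^{\pm t}$ and $t$, yielding after projectivisation the curve $t\mapsto(e^{-t},t,-e^t)$ of case~(a); for $c>0$ they involve $\cos t,\sin t$ and linear/trigonometric combinations, giving $t\mapsto(t-\sin t,\cos t,t+\sin t)$ of case~(b); and for $c=0$ the linearisation degenerates to $\psi''''=0$ up to projective equivalence, whose osculating curve is the rational normal curve of case~(c). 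I would verify each identification by checking that the listed curve is a fundamental osculating solution up to the $GL$-action that defines projective equivalence.

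\medskip

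The genuinely hard part is not the conceptual structure but controlling Stage~1: ensuring that $W_1$ vanishes identically rather than merely on the zero locus of $S$, and pinning down the exact rational coefficient in $W_0$. Stage~2, by contrast, reduces cleanly once one exploits that $S(u)$ is constant along solutions, so that the three cases of \eqref{eq_solution} directly dictate the three model curves and the role of the sign of $S(u)$ becomes transparent.
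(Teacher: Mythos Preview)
Your overall architecture matches the paper's proof: Stage~1 is a direct substitution into the displayed formulae for $W_0$ and $W_1$, and Stage~2 reduces via the first integral $S(u)$ to three representative solutions, linearises about each, and reads off the associated projective curve. The paper does exactly this, even using the same point-transformation trick to normalise to $u(t)=e^t$, $u(t)=t$, or $u(t)=\tan t$ before linearising.

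There is, however, a concrete gap in your Stage~2. You assert that for $S(u)<0$ ``the fundamental solutions involve $e^{\pm t}$ and $t$, yielding after projectivisation the curve $t\mapsto(e^{-t},t,-e^t)$''. This is not what actually happens. Linearising \eqref{eq_EL4a} at $u(t)=e^t$ gives $v''''=4v'''-5v''+2v'$, whose characteristic roots are $0,1,1,2$; a fundamental system is $1,\,e^t,\,te^t,\,e^{2t}$, with no $e^{-t}$ anywhere. The curve you obtain by projectivising these solutions is $t\mapsto(e^t:te^t:e^{2t}:1)$, which is \emph{not} projectively equivalent to $(e^{-t},t,-e^t)$. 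The missing step, which the paper invokes explicitly, is that the field of curves in $P(T\MM)$ is the \emph{projective dual} of the solution curve of the linearisation: one must pass from $t\mapsto(e^t:te^t:e^{2t}:1)$ to its dual curve, and it is the dual that is equivalent to the curve in part~(a). The same dualisation is needed in cases~(b) and~(c). Without it your identification of the three model curves is unjustified, and your heuristic about which exponentials appear is misleading. Once you insert the dualisation (or equivalently describe the curve via $\pi_*\operatorname{span}(\partial_r)$ rather than via the fundamental solutions directly), the argument goes through as in the paper.
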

\begin{proof}
In order to find $W_0$ and $W_1$ one uses the explicit formulae provided above. It is a matter of computations to verify the assertion $W_1=0$ and $W_0=-\frac{36}{100}S(u)^2$.

In order to prove the second part of the theorem we recall that the solution space of a 4th order ODE can be defined as $\MM=J^3(\R,\R)/X_F$ where $J^3(\R,\R)$ is the space of 3-jets of functions $\R\to\R$ and $X_F$ is the total derivative vector field $X_F=\frac{d}{dt}=\partial_t+p\partial_u+q\partial_p+r\partial_q+F\partial_r$ where $(t,x,p,q,r)$ are standard coordinates on $J^3(\R,\R)$: $t$ and $u$ are the independent and dependent variables, $p=u'$, $q=u''$ and $r=u'''$. There is a canonical quotient map $\pi\colon J^3(\R,\R)\to\MM$ and the field of curves in $P(T\MM)$ is geometrically defined as the projectivization of the union of all 1-dimensional subspaces in $T\MM$ of the form $\pi_*(\mathrm{span}(\partial_q))$. Indeed, $\pi_*(\mathrm{span}(\partial_q))$ defines  a one-parameter family of lines in $T_s\MM$ for any solutionon $s\in\MM$. After the projectivization the family of lines becomes a curve in $P(T_s\MM)$.

Using \eqref{eq_solution}, and applying the following point transformation of $J^0(\R,\R)$
\[
(t,u)\mapsto \left(\frac{1}{\tilde c}t,\frac{Du-B}{-Cu+A}\right)
\]
for an appropriate choice of $A,B,C,D$ and certain $\tilde c>0$, one gets that a general solution to \eqref{eq_EL4a} reduces to either $u(t)=e^{t}$, $u(t)=t$, or $u(t)=\tan(t)$, depending on the original sign of $S(u)$. It follows that it is sufficient to find curves in  $P(T\MM)$ at points corresponding to the three particular solutions.

We exploit the approach of \cite{D} that uses linearization of an ODE. Indeed, as proved in \cite{D} the W\"unschmann invariants of an ODE coincides with the Wilczynski invariants of the linearized equation, and in order to find the upper mentioned curves it is sufficient to consider the linearized equation. The linearization at $u(t)=e^t$ takes the form
\[
v''''=4v'''-5v''+2v'
\]
with a general solution $v(t)=\alpha e^t+\beta te^t+\gamma e^{2t}+\delta$. The linearization at $u(t)=\tan(t)$ takes the form
\[
v''''=8\tan(t)v'''+(8-12\tan^2(t))v''-16\tan(t)v'
\]
with a general solution $v(t)=\alpha \tan(t)+\beta\frac{t}{\cos^2(t)}+\gamma\tan^2(t)+\delta$. Finally, the linearization at $u(t)=t$ is $v''''=0$ with a general solution $v(t)=\alpha t^3+\beta t^2+\gamma t+\delta$. In all three cases the constants $(\alpha,\beta,\gamma, \delta)$ are coordinates on the respective solution space $\MM$ of the linearized equation and the aforementioned curves in $P(T\MM)$ turn out to be the dual projective curves to the curves $t\mapsto (e^t:te^t: e^{2t}:1)$, $t\mapsto(\tan(t):\frac{t}{\cos^2(t)}:\tan^2(t):1)$ or $t\mapsto(t^3:t^2:t:1)$, respectively.
\end{proof}
\begin{remark}
Note that the geometry of the Schwarzian equation \eqref{eq_schwarz} is investigated in \cite{T} as an example of a 3rd order ODE. It is proved that the W\"unschmann invariant for the equation $S(u)=0$ vanishes and consequently the solution space has a natural conformal structure of Lorentzian signature (which is a 3-dimensional counterpart of $GL(2)$-structures). That is consistent with Theorem \ref{thm2}, where both W\"unschmann invariants vanish for special solutions satisfying \eqref{eq_schwarz}.

For $S(u)\neq 0$ we get two open subsets of the solution space equipped with fields of certain curves in the projective tangent bundles. The osculating cones of the curves define surfaces in the projective bundles, which can be referred to as the causal structures, as they replace the light cones of a conformal metric and can define (at least locally) a casualty relation between points. We refer to \cite{M} (see also  \cite{KM}) for a comprehensive study of the causal structures.
\end{remark}

\subsection{Extended variations}
Note that Lagrangian \eqref{eq_L} differs from the Schwarzian $S(u)$ only by a differential. Indeed one has
\begin{equation}\label{eq_bb}
S(u)=\frac{d}{dt}\left(\frac{u''}{u'}\right)-\frac{1}{2}L.
\end{equation}
It follows that the Euler--Lagrange equation for the integral functional
\[
\II_S=\int_{t_0}^{t_1}S(u)dt
\]
coincide with the one for $\II_L$ (note that although $S(u)$ is of third order, the associated Euler--Lagrange equation is of 4th order because $S(u)$ is linear in $u'''$). The functional $\II_S$ can be viewed as a reduction of a functional introduced in \cite{DK} for the conformal geodesics. Note that the third order Lagrangian of \cite{DK} has a natural interpretation in the tractor calculus (c.f. \cite{CG,DK,GST,SZ}). 

We shall now exploit $\II_S$ and show that one can actually get equation \eqref{eq_schwarz} as the Euler--Lagrange equation. Indeed, as announced in the Introduction, we shall apply an idea developed in \cite{DK} of appropriate choice of admissible variations. Namely, we enlarge the class of variations such that they no longer keep endpoints of a curve fixed.  However, in order to apply this approach one needs to adjust the fundamental lemma of the calculus of variations properly, and take care of the boundary terms. That is the reason we shall use $\mathcal{I}_S$ instead of $\mathcal{I}_L$. Indeed, as we shall see $\mathcal{I}_S$ leads to second-order boundary conditions which will give necessary freedom to apply a variant of the fundamental calculus of variations.

We shall also use the following first order differential operator acting on variational vector fields $v$ along given $u$
\[
D_u(v)=v'-u''(u')^{-1}v.
\]
This operator appeared in formula \eqref{eq_var3} before and will be of interests in the following
\begin{theorem}\label{thm3}
Function $u$ is a solution to equation \eqref{eq_schwarz}  if and only if $u$ is a critical point of $\II_S$ with respect to the class of variations $v$ such that the endpoint condition
\[
(D_u^2(v)+S(u)v)|^{t_1}_{t_0}=0
\]
is satisfied.
\end{theorem}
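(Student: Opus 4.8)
The plan is to compute the first variation $\delta_v\II_S$ explicitly and reorganize it so that its boundary term is exactly the expression appearing in the endpoint hypothesis; the statement then reduces to an analysis of the bulk integrand alone. The quickest route reuses the intermediate identity \eqref{eq_var3}. Since the prefactor there equals $-2S(u)$ and $(u')^{-1}v'-u''(u')^{-2}v=(u')^{-1}D_u(v)$, while \eqref{eq_bb} gives $\II_S=\left[u''/u'\right]_{t_0}^{t_1}-\tfrac12\II_L$, I obtain
\[
\delta_v\II_S=\int_{t_0}^{t_1}S(u)\,(u')^{-1}D_u(v)\,dt+\left[\delta_v\!\left(\frac{u''}{u'}\right)-\tfrac12 B_1\right]_{t_0}^{t_1}.
\]
I would then use the two identities $(u')^{-1}D_u(v)=\frac{d}{dt}(v/u')$ and $D_u^2(v)+S(u)v=u'\big((v/u')''+S(u)(v/u')\big)$, both consequences of $D_u(u'g)=u'g'$. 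Setting $w=v/u'$, a short computation shows that the bracketed boundary contribution collapses to $w''+S(u)w$, so that
\[
\delta_v\II_S=\int_{t_0}^{t_1}S(u)\,w'\,dt+\left[(u')^{-1}\big(D_u^2(v)+S(u)v\big)\right]_{t_0}^{t_1},\qquad w=\frac{v}{u'}.
\]
The endpoint hypothesis is precisely the requirement that this boundary term vanish, leaving $\delta_v\II_S=\int_{t_0}^{t_1}S(u)\,w'\,dt$ on the admissible class.

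The forward implication is then immediate: if $S(u)\equiv0$ the bulk integral is identically zero and the boundary term vanishes by admissibility, so every such $u$ is critical.

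For the converse I would proceed in two stages. First, I test criticality against variations with $v,v',v''$ all vanishing at $t_0$ and $t_1$; these are admissible and annihilate the boundary term, so $\int_{t_0}^{t_1}S(u)\,w'\,dt=0$ for all such $w$. Integrating by parts (the boundary contribution now vanishes because $w$ vanishes at the endpoints) turns this into $\int_{t_0}^{t_1}\frac{d}{dt}S(u)\,w\,dt=0$, and the fundamental lemma forces $\frac{d}{dt}S(u)=0$, i.e.\ $S(u)\equiv c$ for a constant $c$. Second, I substitute $S(u)\equiv c$ back into the main formula: since $\int_{t_0}^{t_1}c\,w'\,dt=c\,[w]_{t_0}^{t_1}$ and the admissibility relation $\big[w''+c\,w\big]_{t_0}^{t_1}=0$ lets me eliminate $[w'']_{t_0}^{t_1}$, the whole variation collapses to $\delta_v\II_S=c\,[w]_{t_0}^{t_1}$. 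Choosing an admissible variation with $w(t_1)\neq w(t_0)$ — possible exactly because the enlarged class no longer clamps the endpoints — forces $c=0$, hence $S(u)=0$.

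I expect the main obstacle to be the boundary bookkeeping behind the second display: one must verify that $\delta_v(u''/u')-\tfrac12 B_1$, together with the boundary produced by keeping the bulk only half-integrated, assembles into exactly $(u')^{-1}\big(D_u^2(v)+S(u)v\big)$ and into no competing combination, since it is this precise matching with the endpoint condition that makes the boundary drop out. The second delicate point is procedural: in the converse one must stop the bulk at $\int_{t_0}^{t_1} S(u)\,w'\,dt$ rather than fully integrating it to $-\int_{t_0}^{t_1}\frac{d}{dt}S(u)\,w\,dt$, because only then does admissibility reduce the residual variation to the single term $c\,[w]_{t_0}^{t_1}$, whose non-triviality over the enlarged class is what upgrades $S(u)=\const$ to $S(u)=0$.
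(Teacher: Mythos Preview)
Your argument is correct, and the variational identity you derive---$\delta_v\II_S=\int S(u)\,w'\,dt+\big[(u')^{-1}(D_u^2(v)+S(u)v)\big]_{t_0}^{t_1}$ with $w=v/u'$---is exactly what the paper obtains (your substitution $w=v/u'$, based on $D_u(u'g)=u'g'$, is a tidy way to verify that the boundary term really is $(u')^{-1}(D_u^2(v)+S(u)v)$). Where you genuinely diverge from the paper is in the converse. The paper argues directly: given any bump function $\varphi$, it solves the first-order ODE $D_u(v)=\varphi$ and then perturbs $v$ near an endpoint by a small, highly curved correction $\hat v$ so that the second-order boundary condition is met while $D_u(\tilde v)$ stays $H^1$-close to $\varphi$; this lets the fundamental lemma be applied in one stroke to conclude $S(u)=0$. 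Your two-stage route---first restrict to compactly supported $v$ (automatically admissible) and integrate by parts to force $\tfrac{d}{dt}S(u)=0$, then exhibit a single admissible variation with $[w]_{t_0}^{t_1}\neq0$ to kill the constant---is more elementary and sidesteps the approximation step entirely. The trade-off is that the paper's construction makes explicit \emph{why} the boundary condition being second order (rather than first) is what creates the extra freedom, whereas in your argument this point is somewhat hidden in the final step; on the other hand, your approach avoids the mild analytic hand-waving in the paper's ``parabola smoothly glued with zero'' perturbation.
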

\begin{proof}
By \eqref{eq_bb} we have $\II_S=\left.\left(\frac{u''}{u'}\right)\right|^{t_1}_{t_0}-\frac{1}{2}\II_L$. Using \eqref{eq_var3} for $\II_L$ we find that
\begin{equation}\label{eq_aa}
\delta_v\II_S[u]=\int_{t_0}^{t_1}S(u)D_u(v)dt + B|^{t_1}_{t_0}
\end{equation}
where the boundary term $B$, which follows from \eqref{eq_bb}, equals
\[
B=\frac{v''}{u'}-\frac{u''v'}{(u')^2}-\frac{1}{2}B_1=\frac{v''}{u'}-2\frac{u''v'}{(u')^2}+\frac{(u'')^2 v}{2(u')^3},
\]
and can be equivalently written as
\[
B=(u')^{-1}(D_u^2(v)+S(u)v).
\]

Now, we consider a class of variations such that $B|^{t_1}_{t_0}=0$. Thus, \eqref{eq_aa} takes the form
\begin{equation}\label{eq_cc}
\delta_v\II_S[u]=\int_{t_0}^{t_1}S(u)D_u(v)dt
\end{equation}
and we would like to deduce that the assumption that $\delta_v\II_S[u]=0$ for all $v$ satisfying $B|^{t_1}_{t_0}=0$ implies that $S(u)=0$. To prove this it would be enough to take as $D_u(v)$ in \eqref{eq_cc} an arbitrary bump function $\varphi$ concentrated in a neighborhood of a point $t\in(t_0,t_1)$ (compare the classical proof of the fundamental lemma of the calculus of variations). This, in general, is not possible for $v$ satisfying $B|^{t_1}_{t_0}=0$. Nevertheless, we can always solve the first order ODE: $D_u(v)=\varphi$ for $v$. If $B|^{t_1}_{t_0}=0$ holds for this particular $v$ we are done. Otherwise, we show that there is a perturbation $\tilde v=v+\hat v$ such that $\hat v$ that is arbitrarily small in $H^1$-norm and such that $B|^{t_1}_{t_0}=0$ is satisfied for $\tilde v$. For such $\tilde v$, $D(\tilde v)$ will be arbitrarily close to the bump function $\varphi$ and that will be sufficient to complete the reasoning.

The perturbation $\hat v$ can be found using the fact that the condition $B|^{t_1}_{t_0}=0$ depends on second derivative of $v$ in neighborhoods of endpoints $t_0$ and $t_1$. Indeed, as $\hat v$ we take a segment of a parabola in a small neighborhood of $t_0$ smoothly glued with a constant zero function on the rest of the interval $[t_0,t_1]$. The parabola can be taken such that $\hat v$ and $D_u(\hat v)$ are arbitrary small (indeed, take: $t\mapsto c(t-t_0-\epsilon)^2$ for a small $\epsilon>0$ and some constant $c$). On the other hand $D^2_u(\hat v)(t_0)$ can be arbitrary large (as it depends on the value of $c$) and can be adjust such that $v+\hat v$ satisfies the boundary condition $B|^{t_1}_{t_0}=0$ (see also Corollary 4.2 of \cite{DK} for a similar construction).
\end{proof}

\end{document}